\documentclass[a4paper,12pt]{article}
\usepackage{fullpage}
\usepackage{amsfonts}
\usepackage{amssymb,amsthm,amsmath,color}
\usepackage{array}
\usepackage{mathrsfs}
\usepackage{dsfont}
\usepackage{lmodern}
\usepackage{hyperref}
\usepackage{graphicx}
\usepackage[utf8]{inputenc}  
\usepackage[T1]{fontenc} 
\usepackage{stmaryrd}  
\numberwithin{equation}{section}

\DeclareMathAlphabet{\mathbbo}{U}{bbold}{m}{n}

\newtheorem{theorem}{Theorem}[section]

\newtheorem{Remarque}[theorem]{Remark}

\newtheorem{prop}[theorem]{Proposition}

\newtheorem*{thm}{Theorem}


\date{}
\title{Sidon set in a union of intervals}

\author{RIBLET Robin}
\begin{document}
\maketitle
\begin{abstract}
We study the maximum size of Sidon sets in unions of integers intervals. If $A\subset\mathbb{N}$ is the union of two intervals and if $\left| A \right|=n$ (where $\left| A \right|$ denotes the cardinality of $A$), we prove that $A$ contains a Sidon set of size at least $0,876\sqrt{n}$. On the other hand, by using the small differences technique, we establish a bound of the maximum size of Sidon sets in the union of $k$ intervals.
\end{abstract}
 
\section{Introduction}

A Sidon set of integers is a subset of $\mathbb{N}$ with the property that all sums of two elements are distinct. Working on Fourier series, Simon Sidon \cite{Simon_Sidon} was the first to take an interest in these sets. He sought to bound the size of the largest Sidon set in $\left\llbracket1,n\right\rrbracket$. The question has been intensively studied and today it is well known (see \cite{HalberstamRoth}) that the maximum size of a Sidon set in an interval of size $n$ is asymptotically equivalent to $\sqrt{n}$. We denote by $F\big(\left\llbracket 1,n\right\rrbracket\big)$ this maximum size. The lower bound was obtained independently by Chowla \cite{Chowla} and Erd\H{o}s \cite{ErdosTuran} who etablished 
$$\liminf\limits_{\substack{n\rightarrow +\infty}}\dfrac{F\big(\left\llbracket 1,n\right\rrbracket\big)}{\sqrt{n}}\geqslant 1.$$ 
For the upper bound, Erd\H{o}s and Tur\'an \cite{ErdosTuran} proved that $F\big(\left\llbracket 1,n\right\rrbracket\big)<\sqrt{n}+O\left( n^{1/4}\right)$. This was sharpened by Lindström \cite{Lindstrom_ameliore_E-T} who proved that $F\big(\left\llbracket 1,n\right\rrbracket\big)<n^{1/2}+n^{1/4}+1$.
Finally, very recently, Balogh, Füredi and Roy \cite{BaloghRoyFurediArXiv} obtained 
$$F\big(\left\llbracket 1,n\right\rrbracket\big)<\sqrt{n}+0,998n^{1/4} .$$
In this paper we are interested in the size of the largest Sidon set contained in the union of two intervals. For $A\subseteq\mathbb{N}$ we denote by $F(A)$ the maximal cardinality of a Sidon set in $A$. Erd\H{o}s conjectured that $F(A)\geqslant \sqrt{n}$ for all sets $A$ of size $n$. For more readability, if $f$ and $g$ are two fonctions such that $f(n)\geqslant (1-o(1))g(n)$, we will write $f(n)\gtrsim g(n)$. In the same way if $f(n)\leqslant (1+o(1))g(n)$, we will write $f(n)\lesssim g(n)$.
Abbott \cite{Abbott} proved that $F(A)\gtrsim 0,0805\sqrt{n}$ and so, if $I_1$ and $I_2$ are two intervals of respective cardinalities $n_1$ and $n_2$ 
$$F( I_1\cup I_2 ) \gtrsim 0,0805\sqrt{n_1+n_2}.$$
We will prove (Theorem \ref{S_dans2intervalles}) that
$$F( I_1\cup I_2 ) \gtrsim 0,876\sqrt{n_1+n_2}.$$
Conversely, we will show that $F( I_1\cup I_2 )\lesssim\sqrt{n_1+n_2}$ and more generally we will give a bound for the maximum cardinality of a Sidon set in a union of $k$ intervals (Theorem \ref{S_dans_intervalles}). In our result the number $k$ of intervals can grow up with the size of $A$.

\section{Lower bound for the size of the largest Sidon set contained in the union of two intervals} 

Previous works by Singer \cite{Singer}, Chowla \cite{Chowla}, Erd\H{o}s and Tur\'an \cite{ErdosTuran}, lead to 
\begin{equation}\label{Sdans(1,n)}
F\left(\left\llbracket 1,n\right\rrbracket\right)\sim \sqrt{n}.
\end{equation}
Since Sidon's property is stable by translation, if $A$ is an interval of size $n$, \eqref{Sdans(1,n)} proves that $F(A)\sim \sqrt{n}$. We shall study the case where $A$ is the union of two intervals. We could simply choose a Sidon set in the largest of the two intervals.
Therefore if $A=I_1\cup I_2$ where $I_1$ and $I_2$ are disjoint intervals of size $n_1$ and $n_2$ such that $n_1\geqslant n_2$, by \eqref{Sdans(1,n)} we get a Sidon set of size $\sqrt{n_1}$ in $I_1$, which yields
$$F(A)\geqslant\sqrt{n_1}=\frac{1}{\sqrt{2}}\sqrt{2n_1}\geqslant\frac{1}{\sqrt{2}}\sqrt{n_1+n_2}> 0,707\sqrt{n_1+n_2}.$$
We shall get a more precise result in the following statement.
\begin{theorem}\label{S_dans2intervalles}
Let $I_1$ and $I_2$ be two disjoint intervals of respective cardinalities $n_1$ and $n_2$. We have
$$F( I_1\cup I_2 ) \gtrsim 0,876\sqrt{n_1+n_2}.$$
\end{theorem}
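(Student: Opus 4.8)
The plan is to build the Sidon set out of a single \emph{perfect difference set}, exploiting the fact that any Sidon set, once cut into two pieces, yields two subsets whose difference sets meet only in $0$. First I would translate so that $I_1=\llbracket 0,n_1-1\rrbracket$ and $I_2=\llbracket g,g+n_2-1\rrbracket$ for some gap $g$, and assume $n_1\geqslant n_2$; the trivial bound recalled above already gives the factor $1/\sqrt2$, so the whole point is to extract additional elements from $I_2$ without destroying the Sidon property. The engine is Singer's theorem: for a prime power $q$ there is a perfect difference set $D\subseteq\mathbb{Z}/m\mathbb{Z}$ with $m=q^2+q+1$ and $|D|=q+1$, in which every nonzero residue is a difference exactly once. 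Since distinctness of differences modulo $m$ implies distinctness in $\mathbb{Z}$, the lift $S$ of $D$ to $\llbracket 0,m-1\rrbracket$ is a genuine Sidon set of size $q+1\sim\sqrt m$, and by the density of prime powers one may choose $q$ so that $m$ is within $o(\sqrt n)$ of any prescribed target, contributing only to the $o(1)$.

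The core observation I would isolate as a lemma is the following splitting property. If $S$ is Sidon and we cut it at a point $N$, writing $A=S\cap\llbracket 0,N-1\rrbracket$ and $C=S\cap\llbracket N,m-1\rrbracket$, then the two sets $A$ and $C-N$ have disjoint difference sets: an equality $a-a'=(c-N)-(c'-N)$ with $a,a'<N\leqslant c,c'$ would force $a+c'=a'+c$, impossible in a Sidon set unless the pairs coincide. Taking $B_1=A\subseteq I_1$ and $B_2=(C-N)\subseteq I_2$ with $N=n_1$, one checks that for a \emph{large} gap the union $B_1\cup(g+B_2)$ is Sidon: the within-$B_1$, within-$B_2$, and cross sums occupy three disjoint ranges once $g>2(n_1+n_2)$, while collisions inside each range are excluded precisely by the Sidon property of $S$ and by the disjointness of the two difference sets. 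This already yields $|B_1|+|B_2|=|S|\sim\sqrt{n_1+n_2}$ in the large-gap regime, so the binding difficulty lies entirely with moderate gaps.

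The hard part, and the source of the constant, is exactly this dependence on $g$: for intermediate gaps the cross-differences $B_1-B_2-g$ have magnitude comparable to the internal differences of $B_1$ and $B_2$, and can collide with them. To control this I would not fix $S$ but instead run over the dilates $\lambda S\bmod m$ for $\lambda\in(\mathbb{Z}/m\mathbb{Z})^{\times}$, each of which is again a perfect difference set and hence Sidon; the collision pattern between cross-differences and internal differences varies with $\lambda$, and an averaging (or pigeonhole) argument over $\lambda$ should produce a dilate for which, after possibly discarding a controlled proportion of elements near the cut, no cross-collision survives. Optimizing the number of elements one is forced to discard as a function of the ratio $t=n_2/n_1\in(0,1]$, and taking the worst such $t$, is what I expect to yield the explicit worst-case value $0.876$. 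The main obstacle is thus twofold: proving that a good dilate exists for every gap (the averaging step), and carrying out the resulting one-parameter optimization over $t$ cleanly enough that the extremal configuration produces a constant strictly above $1/\sqrt2$; the number-theoretic approximation of the target modulus $m$ by $q^2+q+1$ is by comparison routine and only affects the $o(1)$ term.
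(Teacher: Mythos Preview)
Your large-gap argument is essentially the paper's case \textbf{iii.a}: split a Sidon set $S\subset\llbracket 1,n_1+n_2\rrbracket$ at $n_1$, translate the top piece into $I_2$, and observe that the three sum-types land in disjoint ranges once the gap exceeds $n_1$. This part is fine and gives $F(A)\gtrsim\sqrt{n_1+n_2}$ in that regime.

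The genuine gap is your treatment of moderate gaps. You propose to run over dilates $\lambda S\bmod m$ and argue by averaging that some dilate has few cross-collisions of the form $b_1+b_1'=b_1''+(g+b_2)$; you then want to discard a bounded fraction of elements and optimise over $t=n_2/n_1$. But you do not carry out the averaging, and it is not at all clear that it yields anything: the number of potential cross-collisions is of order $|S|^3\sim n^{3/2}$ against only $m\sim n$ dilates, so a first-moment bound does not obviously control them. You acknowledge this as ``the main obstacle'', and indeed nothing in your sketch forces a constant above $1/\sqrt2$, let alone the specific value $0{,}876$.

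The paper avoids cross-collisions altogether by a different case split, governed by \emph{two} parameters $\alpha=n_2/n_1$ and $\beta=(\min I_2-n_1)/n_1$. For small gaps it applies Singer not to $\llbracket 1,n_1+n_2\rrbracket$ but to the full span $\llbracket 1,\max I_2\rrbracket$: Singer's theorem gives $p+1$ Sidon sets whose union is $\llbracket 1,p^2+p+1\rrbracket$, and a pigeonhole over these \emph{translates} (not dilates) produces one with at most $\beta n_1/(p+1)$ elements in the gap; deleting those yields a Sidon subset of $I_1\cup I_2$ of size $\gtrsim\sqrt{(1+\alpha)/(1+\alpha+\beta)}\sqrt{|A|}$. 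For intermediate gaps with $\beta<1$ it does not attempt the full $\sqrt{n_1+n_2}$; instead it takes a Sidon set in a \emph{shorter} interval (of length roughly $\tfrac{1+\beta}{2}n_1+n_2$ or $\tfrac23(1+\alpha+\beta)n_1$), splits it, and chooses the split point so that the three sum-ranges are still disjoint. The constant $\sqrt{(\sqrt{13}+1)/6}\approx 0{,}876$ then arises from balancing the bounds of these cases by optimising over the two thresholds $(\alpha_0,\beta_0)$, not from a one-parameter optimisation in $t$ as you suggest.
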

\begin{proof}
Let $A$ be the union of two disjoint intervals of respective cardinalities $n_1$ and $n_2$. Since Sidon's property is stable by translation and symmetry, even if it means translating and considering $ A '= \max A-A+1 $, we can assume that $ A = I_1 \sqcup I_2 $ where $I_1=\left\llbracket 1,n_1 \right\rrbracket$, and $ I_2 $ is an interval of cardinality $n_2\leqslant n_1$. \\
$$\begin{minipage}[l]{17cm}
\includegraphics[height=1.4cm]{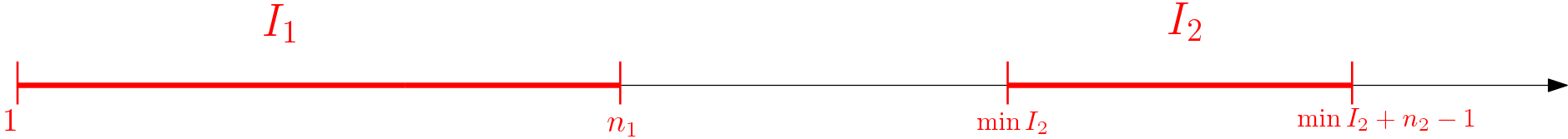}
\end{minipage}$$
$$ $$
\underline{Strategy} : We are going to discuss according to two parameters : the size of $ I_2 $ compared to $ I_1 $, and the distance between $ I_1 $ and $ I_2 $. For that we will consider 
$$\alpha=\frac{n_2}{n_1} \ \text{ and } \ \beta=\frac{\min I_2-n_1}{n_1}.$$
We will distinguish several cases. First of all, if $ \alpha $ is less than a certain level $ \alpha_0 $ (which we will have to optimize at the end of the proof) then we will only have to choose a large Sidon set in $ I_1 $. Indeed, if $ \alpha $ is small, then $ I_2 $ is small in front of $ I_1 $. We will therefore not need its contribution to choose our Sidon set. If, on the other hand, $ \alpha $ is greater than $ \alpha_0 $, then we will distinguish two more cases depending on the size of $ \beta $. If $ \beta $ is less than a certain level $ \beta_0 $ (which we will also have to optimize at the end of the proof), then $ I_2 $ is sufficiently close to $ I_1 $. To get a big Sidon set in $ I_1 \cup I_2 $, we will remove the middle elements : those included in $ \left\llbracket n_1+1, \min I_2-1 \right\rrbracket $ to a big Sidon set in $ \left\llbracket 1, \max I_2 \right\rrbracket $. We will use Singer's famous theorem \cite{Singer} (see also \cite{HalberstamRoth} chapter II) to find a large Sidon set in $ \left\llbracket 1, \max I_2 \right\rrbracket $ with few elements in $ \left\llbracket n_1+1, \min I_2-1 \right\rrbracket$. Finally if $ \beta $ is greater than $ \beta_0 $, we will transform a Sidon set in $ \left\llbracket 1, n_1 + n_2 \right\rrbracket $ to obtain a large Sidon set in $ I_1 \cup I_2 $.

\vspace{0.5cm}

Let $\alpha_0\in \left]0,1\right]$, $\beta_0\in\mathbb{R}_+$, $\alpha=\frac{n_2}{n_1}$ et $\beta=\frac{\min I_2-n_1}{n_1}$. \\

\vspace{0.3cm}

$$\begin{minipage}[l]{17cm}
\includegraphics[height=2cm]{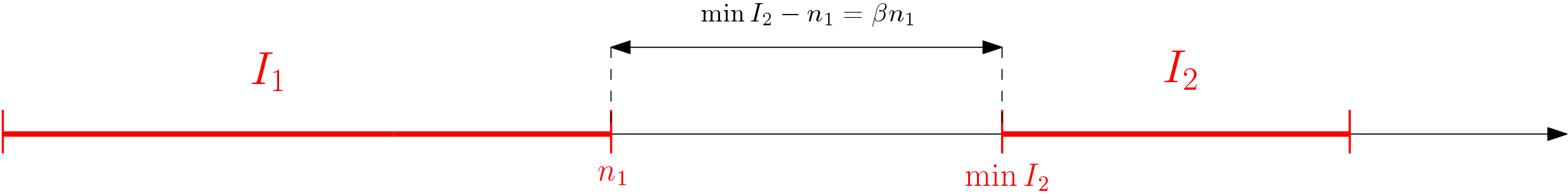}
\end{minipage}$$
$$ $$
\textbf{i)\underline{ If $\alpha\leqslant\alpha_0$}.} 

\vspace{0.5cm}

Write $n_2=\alpha n_1$. It suffices then to choose a Sidon set $ S $ in $ I_1 $ of size $ \sqrt{n_1} $. In this way, we have
$$ F(A) \gtrsim \sqrt{n_1} \gtrsim \frac{1}{\sqrt{1+\alpha}}\sqrt{n_1+n_2} \gtrsim \frac{1}{\sqrt{1+\alpha_0}}\sqrt{n_1+n_2}  .$$
Finally since $\left| A \right|=n_1+n_2$, in this case we get
\begin{equation}\label{alpha<alpha_0}
F(A)\gtrsim \frac{1}{\sqrt{1+\alpha_0}}\sqrt{\left| A \right|}.
\end{equation}

\vspace{0.5cm}

\textbf{ii)\underline{ If $\alpha\geqslant\alpha_0$ and $\beta\leqslant\beta_0$}.} 

\vspace{0.5cm}

We write $n_2=\alpha n_1$ again and we recall that $\beta=\frac{\min I_2-n_1}{n_1}$. In this way, if $n=\max A$, we have
\begin{equation}\label{n=1+alpha+beta)n1}
n= (1+\alpha+\beta)n_1.
\end{equation}

\vspace{0.3cm}

$$\begin{minipage}[l]{17cm}
\includegraphics[height=2.5cm]{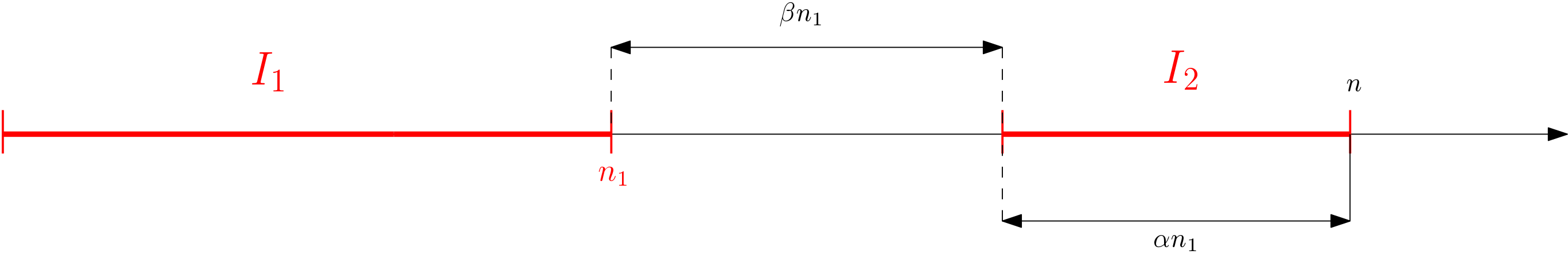}
\end{minipage}$$
$$ $$
As explained before, we want to use Singer's theorem.
\begin{thm}[Singer, \cite{Singer}]\label{thm_Singer}
Let $p$ be a prime. Then there exist $p+1$ Sidon sets $S_1,...,S_p$ each of size $p+1$ such that
$$\bigcup\limits_{\substack{i=1}}^p S_i=\left\lbrace 1,...,p^2+p+1 \right\rbrace.$$
\end{thm}
We want to use it in $\left\llbracket 1,n \right\rrbracket$, so we need to approach $ n $ by $p^2+p+1$ where $ p $ is a prime number. Let $ p $ and $ p '$ be the two consecutive prime numbers such that
$$p^2+p+1\leqslant n <p'^2+p'+1.$$
Since $ p $ and $ p '$ are consecutive, it is well known for instance that $ p'-p = O(p^{5/8}) $ (see \cite{p-p'=O(p5/8)}). (Actually, better results exist on the distance between two consecutive primes (see \cite{p-p'=O(best)})
but this bound is enough for us).
We have $p^2+p+1\leqslant n$. According to Singer's theorem (theorem \ref{thm_Singer}), there exist $p+1$ Sidon set $S_i $ ($i=1,...,p+1$) each of size $p+1$, whose union is $\left\llbracket 1,p^2+p+1 \right\rrbracket$. 
Since $n=p^2+O(p'^2-p^2+p'-p)=p^2+O(p^{13/8})$, we have
$$\min I_2=(1+\beta)n_1=\frac{1+\beta}{1+\alpha+\beta}n=\frac{1+\beta}{1+\alpha+\beta}p^2+O(p^{13/8})\leqslant p^2+p+1,$$
for sufficiently large $n$. Therefore $\left\rrbracket n_1,\min I_2 \right\llbracket\subset\left\llbracket 1,p^2+p+1 \right\rrbracket$, thus 
$$\bigcup\limits_{\substack{i=1}}^{p+1}\big( S_i\cap\left\rrbracket n_1,\min I_2 \right\llbracket\big)=\left\rrbracket n_1,\min I_2 \right\llbracket,$$ and
$$\sum\limits_{\substack{i=1}}^{p+1}\big| S_i\cap\left\rrbracket n_1,\min I_2 \right\llbracket\big|=\beta n_1+o(n_1) .$$
So there exists $i\in\left\llbracket 1,p+1\right\rrbracket$ such that $S=S_i$ satisfies
$$\big| S\cap\left\rrbracket n_1,\min I_2 \right\llbracket\big|\leqslant \dfrac{\beta}{p+1}n_1+o\left(\frac{n_1}{p}\right).$$
Finally, with $S'=S\setminus \left\rrbracket n_1,\min I_2 \right\llbracket$, we have $S'\subset A$ and
$$\left| S' \right|\geqslant p+1-\dfrac{\beta}{p+1}n_1+o\left(\frac{n_1}{p}\right).$$
Now $p\sim\sqrt{n}$, and so by \eqref{n=1+alpha+beta)n1} we get
\begin{align*}
F(A) & \geqslant \left| S' \right| \\ & \geqslant\frac{1+\alpha}{1+\alpha+\beta}\sqrt{n}+o(\sqrt{n})  \\ & \geqslant\sqrt{\frac{1+\alpha}{1+\alpha+\beta}}\sqrt{n_1+n_2}+o(\sqrt{n_1+n_2})\\ & \geqslant\sqrt{\frac{1+\alpha}{1+\alpha+\beta_0}}\sqrt{n_1+n_2}+o(\sqrt{n_1+n_2}).
\end{align*}
Moreover the function which associates $\sqrt{\dfrac{1+x}{1+x+\beta_0}}$ to $ x $ is increasing and we are in case $\alpha\geqslant \alpha_0$, so finally
\begin{equation}\label{alpha>_beta<}
F(A)\gtrsim\sqrt{\frac{1+\alpha_0}{1+\alpha_0+\beta_0}}\sqrt{\left| A \right|}.
\end{equation}

\vspace{0.5cm}

\textbf{iii)\underline{ If $\alpha\geqslant\alpha_0$ and $\beta\geqslant\beta_0$}.} 

\vspace{0.5cm}

Here, we will distinguish between the cases $\beta_0\geqslant 1$ and $\beta_0<1$.

\vspace{0.5cm}

\textbf{iii.a. If $\beta_0\geqslant 1$.} 

\vspace{0.5cm}

Then $ \beta \geqslant 1 $ and therefore $ I_1 $ and $ I_2 $ are sufficiently far apart. \\
$$\begin{minipage}[l]{17cm}
\includegraphics[height=1.5cm]{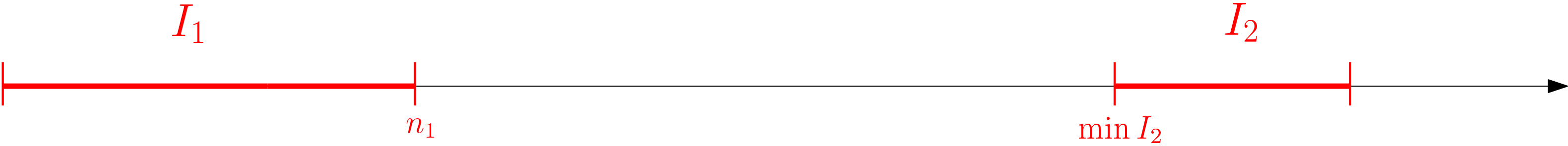}
\end{minipage}$$
$$ $$
We then choose a set of Sidon $ S $ in $\left\llbracket 1,n_1+n_2 \right\rrbracket$ and we define the new set $ S '$ by
$$S'=S_1\sqcup S_2,$$
where $S_1=S\cap\left\llbracket 1,n_1 \right\rrbracket$ and $S_2=S\cap\left\rrbracket n_1,n_1+n_2 \right\rrbracket+\min I_2-n_1$. So $S'\subseteq A$ and we will see that $ S' $ is a Sidon set. First note that $ S_1 $ and $ S_2 $ are Sidon sets, $\max S_1\leqslant n_1$ and $\min S_2>\min I_2\geqslant 2n_1$. Therefore for $a,b\in S'$, $a\neq b$, we have
\begin{equation}\label{disjonction1}
a,b\in S_1\Leftrightarrow a+b<2n_1,
\end{equation}
\begin{equation}\label{disjonction2}
a,b\in S_2\Leftrightarrow a+b>4n_1.
\end{equation}
Let $ a, b, c, d \in S '$ be such that $ a + b = c + d $. We will distinguish between the following three cases : $ a $ and $ b $ both belong to $ S_1 $, both to $ S_2 $, or one belongs to $ S_1 $ and the other to $ S_2 $.
\begin{itemize}
\item If $a,b\in S_1$, then by \eqref{disjonction1} $c,d\in S_1$ and since $ S_1 $ is a Sidon set, $\left\lbrace a,b \right\rbrace=\left\lbrace c,d \right\rbrace$. 
\item If $a,b\in S_2$, then by \eqref{disjonction2} $c,d\in S_2$ and $ S_2 $ is a Sidon set, so $\left\lbrace a,b \right\rbrace=\left\lbrace c,d \right\rbrace$. 
\item If $a\in S_1$ and $b\in S_2$, then as seen in previous arguments, necessarily $ c $ and $ d $ cannot belong both to $ S_1 $ nor both to $ S_2 $. Suppose therefore without lost of generality that $ c \in S_1 $ and $ d \in S_2 $. So we have
$$a+b=c+d \Leftrightarrow a+(b-\min I_2+n_1)=c+(d-\min I_2+n_1),$$
and $a,(b-\min I_2+n_1),c,(d-\min I_2+n_1)\in S$ by construction of $S_1$ and $S_2$. So $\left\lbrace a,(b-\min I_2+n_1) \right\rbrace=\left\lbrace c,d-\min I_2+n_1) \right\rbrace$ because $S$ is a Sidon set. Moreover, since $a,c\in S_1$ and $b,d\in S_2$, we have $a,c\in \left\llbracket 1,n_1 \right\rrbracket$ and $(b-\min I_2+n_1),(d-\min I_2+n_1)\in \left\rrbracket n_1,n_1+n_2 \right\rrbracket$. Hence $a=c$ and $(b-\min I_2+n_1)=(d-\min I_2+n_1)$ so finally $a=c$ and $b=d$. 
\end{itemize}
In conclusion, in any case, we get $\left\lbrace a,b \right\rbrace=\left\lbrace c,d \right\rbrace$, which means that $ S ' $ is a Sidon set. It suffices then to notice that $  \left| S' \right|= \left| S \right| $ and to recall that in $ \left\llbracket 1, n_1 + n_2 \right\rrbracket $, we have Sidon sets of size $ \sqrt{ n_1 + n_2} $, to be able to conclude that when $ \min I_2-n_1 \geqslant n_1 $, we have
\begin{equation}\label{alpha>_beta>_a}
F(A)\gtrsim \sqrt{\left| A \right|}.
\end{equation}

\vspace{0.5cm}

\textbf{iii.b. If $\beta_0< 1$ and $\beta>2\alpha-1$.} 

\vspace{0.5cm}

Let $S$ be a Sidon set in $\left\llbracket 1,\left\lfloor\frac{1+\beta}{2}n_1\right\rfloor+n_2 \right\rrbracket$, and define $S_1=S\cap\left\llbracket 1,\left\lfloor\frac{1+\beta}{2}n_1\right\rfloor \right\rrbracket$, 
$$ S_2=\left( S\cap\left\rrbracket \left\lfloor\frac{1+\beta}{2}n_1\right\rfloor,\left\lfloor\frac{1+\beta}{2}n_1\right\rfloor+n_2 \right\rrbracket\right) +\left\lceil\frac{1+\beta}{2}n_1\right\rceil$$ 
and $S'=S_1\sqcup S_2$. $S_1\subseteq I_1$ and $S_2\subseteq I_2$ so $ S ' \subseteq A $ and we will see that $ S ' $ is a Sidon set. First note that $ S_1 $ and $ S_2 $ are Sidon sets. Moreover, we have $\max S_1\leqslant\frac{1+\beta}{2}n_1$, \\
$\min S_2\geqslant \min I_2+1=(1+\beta)n_1+1$, and since $\beta >2\alpha-1$,
$$\max S_1+\max S_2\leqslant \left(\frac{3}{2}(1+\beta)+\alpha\right)n_1<\left( 2+2\beta\right) n_1.$$ 
So we get as in \textbf{iii.a}, for $a,b\in S'$
$$a,b\in S_1 \Leftrightarrow a+b\leqslant (1+\beta)n_1,$$
$$a,b\in S_2 \Leftrightarrow a+b> (2+2\beta)n_1.$$
Therefore if $\beta>2\alpha-1$, as the previous case, we prove that $S'$ is a Sidon set. So if $\beta>2\alpha-1$, $F(A)\geqslant \left| S' \right| =\left| S \right|$ and we know that we can choose $S$ such that
$$
\left| S \right|  \gtrsim \sqrt{\frac{1+\beta}{2}n_1+n_2}  \gtrsim \sqrt{\frac{1+\beta}{2(1+\alpha)}n+\frac{\alpha}{1+\alpha}n} \gtrsim \sqrt{\dfrac{1+2\alpha+\beta}{2(1+\alpha)}}\sqrt{n},
$$
so finally, if $\beta>2\alpha-1$, we have
\begin{equation}\label{alpha>_beta>_b}
F(A)\gtrsim \sqrt{\dfrac{1+2\alpha_0+\beta_0}{2(1+\alpha_0)}}\sqrt{\left| A \right|}.
\end{equation} 

\vspace{0.5cm}

\textbf{iii.c. If $\beta_0< 1$ and $\beta\leqslant 2\alpha-1$.} 

\vspace{0.5cm}

This time we choose a Sidon set $S$ in $\left\llbracket 1,\left\lfloor\frac{2}{3}(1+\alpha+\beta)n_1\right\rfloor \right\rrbracket$, and define ${S'=S_1\sqcup S_2}$ where $S_1=S\cap\left\llbracket 1,\left\lfloor\frac{1+\alpha+\beta}{3}n_1\right\rfloor \right\rrbracket$, and 
$$S_2=\left( S\cap\left\rrbracket \left\lfloor\frac{1+\alpha+\beta}{3}n_1\right\rfloor,\left\lfloor\frac{2}{3}(1+\alpha+\beta)n_1\right\rfloor \right\rrbracket\right) +\left\lceil\frac{1+\alpha+\beta}{3}n_1\right\rceil .$$ 
Since $\alpha\leqslant 1$ and in the current case $\beta\leqslant 1$, we have $1+\alpha+\beta\leqslant 3$ and so $S_1\subseteq I_1$. Moreover, $\left\lfloor\frac{1+\alpha+\beta}{3}n_1\right\rfloor+1+\left\lceil\frac{1+\alpha+\beta}{3}n_1\right\rceil\geqslant \frac{2}{3}(1+\alpha+\beta)$ and here $\beta\leqslant 2\alpha-1$ so $\frac{2}{3}(1+\alpha+\beta)\geqslant 1+\beta$ and so $S_2\subseteq I_2$. Therefore $ S ' \subseteq A $ and like in the two previous cases, we prove that $ S ' $ is a Sidon set. Finally, in this case, we get the bound
\begin{equation}\label{alpha>_beta>_b_2}
F(A)\gtrsim \sqrt{2\dfrac{1+\alpha_0+\beta_0}{3(1+\alpha_0)}}\sqrt{\left| A \right|}.
\end{equation} 

\newpage

\textbf{iv)\underline{ Conclusion}.} 

\vspace{0.5cm}

Whatever the case we are in, by \eqref{alpha<alpha_0}, \eqref{alpha>_beta<}, \eqref{alpha>_beta>_a}, \eqref{alpha>_beta>_b} and \eqref{alpha>_beta>_b_2}, we have
$$F(A)\gtrsim \min\big( m_1(\alpha_0,\beta_0),m_2(\alpha_0,\beta_0) \big)\sqrt{\left| A \right|} ,$$
where
$$m_1(\alpha_0,\beta_0)=\min\limits_{\substack{\beta_0>2\alpha_0-1}}\max \left( \frac{1}{\sqrt{1+\alpha_0}},\sqrt{\frac{1+\alpha_0}{1+\alpha_0+\beta_0}},\sqrt{\dfrac{1+2\alpha_0+\beta_0}{2(1+\alpha_0)}} \right),$$
and
$$m_2(\alpha_0,\beta_0)=\min\limits_{\substack{\beta_0\leqslant2\alpha_0-1}}\max \left( \frac{1}{\sqrt{1+\alpha_0}},\sqrt{\frac{1+\alpha_0}{1+\alpha_0+\beta_0}},\sqrt{2\dfrac{1+\alpha_0+\beta_0}{3(1+\alpha_0)}} \right) .$$
Optimizing the choices of $ \alpha_0 $ and $ \beta_0 $, we get
$$m_1(\alpha_0,\beta_0)\geqslant \sqrt{\dfrac{\sqrt{13}+1}{6}}\geqslant 0,876 ,$$
reached by $(\alpha_0,\beta_0)=\left( \frac{\sqrt{13}-3}{2},4-\sqrt{13} \right)$, and
$$m_2(\alpha_0,\beta_0)\geqslant \left(\frac{2}{3}\right)^{1/4}\geqslant 0,903 ,$$
reached by $(\alpha_0,\beta_0)=\left( \frac{1+\sqrt{6}}{5},\frac{2\sqrt{6}-3}{5} \right)$.
Finally $m_1(\alpha_0,\beta_0)<m_2(\alpha_0,\beta_0)$ which ends the proof.
\end{proof}

\section{Upper bound for the maximum size of a Sidon set in a union of intervals}

In the previous section we gave a lower bound for the maximum size of a Sidon set in a union of two intervals. Conversely, we seek in this section an upper bound for the maximum size of a Sidon set in a union of intervals. If we consider two intervals of size $n/2$ for example, in each of these two intervals, we can only choose at most (asymptotically) $\sqrt{n/2}$ elements because otherwise it would contradict \eqref{Sdans(1,n)}. A trivial asymptotic bound would therefore be $2\sqrt{n/2}=\sqrt{2}\sqrt{n}$. Using the Erd\H{o}s-Tur\'an small difference technique  \cite{HalberstamRoth}, we can go down to $\sqrt{n}$. 
Actually, we can prove the result for a fixed number of intervals and even for an increasing number of intervals if it remains $o\left( \sqrt{n}\right) $. This is the content of the following theorem.
\begin{theorem}\label{S_dans_intervalles}
If $E$ is a set of cardinality $n\in\mathbb{N}^*$ and $E$ is a union of $k$ intervals, then any Sidon included in $E$ has size at most \\
i) $\left(\alpha+\sqrt{2+\alpha^2} \right)\sqrt{n} +o(\sqrt{n})$ if $\limsup\limits_{\substack{n\rightarrow +\infty}}\dfrac{k}{\sqrt{n}}=\alpha > 0$ \\
ii) $\sqrt{n}+o(\sqrt{n})$ si $k=o(\sqrt{n})$ \\
iii) $\sqrt{n}+\sqrt{k}n^{1/4}+o(n^{1/4})$ if $k=o(n^{1/4})$.
\end{theorem}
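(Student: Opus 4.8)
The plan is to adapt the Erd\H{o}s--Tur\'an ``small differences'' second-moment method; the one new ingredient is a bound on the number of length-$\ell$ windows that can meet $E$. Write $E=\bigsqcup_{j=1}^{k}J_j$ with $|J_j|=\ell_j$, so $\sum_j\ell_j=n$, and let $S\subseteq E$ be a Sidon set with $|S|=s$. For a positive integer $\ell$ and each $x\in\mathbb{Z}$ put $f(x)=\big|S\cap[x,x+\ell]\big|$. Sliding the window $[x,x+\ell]$ over all positions, every element of $S$ lies in exactly $\ell+1$ windows, so $\sum_x f(x)=(\ell+1)s$. A pair of elements at distance $d\le\ell$ is counted in exactly $\ell+1-d$ windows, and since $S$ is Sidon each difference $d\in\{1,\dots,\ell\}$ occurs at most once; hence $\sum_x\binom{f(x)}{2}\le\sum_{d=1}^{\ell}(\ell+1-d)=\binom{\ell+1}{2}$, which gives $\sum_x f(x)^2\le(\ell+1)(s+\ell)$.

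The structure of $E$ enters only here: a window can satisfy $f(x)>0$ only if it meets some $J_j$, and the number of starting positions $x$ with $[x,x+\ell]\cap J_j\neq\emptyset$ equals $\ell_j+\ell$, so there are at most $n+k\ell$ nonempty windows. Cauchy--Schwarz over these windows yields the master inequality, valid for every $\ell\ge1$:
$$(\ell+1)s^2\le(n+k\ell)(s+\ell).$$
The whole theorem then comes from feeding this inequality the right $\ell$ in each regime. For part i) I would take $\ell=\lfloor n/k\rfloor$, the clean (not optimal) value making $n+k\ell\le 2n$; the inequality collapses to $s^2\le 2n+2ks+o(ks)$, and solving this quadratic gives $s\le k+\sqrt{k^2+2n}+o(\sqrt n)=(\alpha+\sqrt{2+\alpha^2})\sqrt n+o(\sqrt n)$ when $k\sim\alpha\sqrt n$. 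The same choice already proves $s\le\sqrt{2n}\,(1+o(1))$ as soon as $k=o(\sqrt n)$, which I would use as the crude a priori bound $s=O(\sqrt n)$ to launch the finer regimes.

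For parts ii) and iii) I would instead take $\ell$ of order $n^{3/4}/\sqrt{k}$, so that simultaneously $\ell\gg\sqrt n$ (which kills the term $ns/\ell$) and $k\ell=o(n)$. Substituting the crude bound $s=O(\sqrt n)$ into the master inequality then gives $s^2\le n+o(n)$, hence $s\le\sqrt n+o(\sqrt n)$, which is ii). For iii), where $k=o(n^{1/4})$, I would reinject the sharper bound $s=\sqrt n\,(1+o(1))$ coming from ii) and optimize $\ell$ exactly, namely $\ell\approx\sqrt{ns/k}$, reaching $s^2\le n+ks+2\sqrt{nks}$. Writing $s=\sqrt n+\delta$ and expanding leaves $2\sqrt n\,\delta\le 2\sqrt{k}\,n^{3/4}+o(n)$, i.e.\ $\delta\le\sqrt{k}\,n^{1/4}+o(n^{1/4})$, which is the claimed estimate.

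The conceptual content is the single window-count $n+k\ell$; I expect the only genuine work to be the error bookkeeping in ii)--iii). One must check that an $\ell$ in the window $\sqrt n\ll\ell\ll n/k$ exists (this is exactly where the hypothesis $k=o(\sqrt n)$ is used), and that every lower-order term produced by the expansion $s=\sqrt n+\delta$ is truly $o(n^{1/4})$ in iii)---the most delicate being the cross term of size $O(k)$ arising from $2\sqrt{nks}$. The bootstrap from the crude $O(\sqrt n)$ bound to the sharp $\sqrt n\,(1+o(1))$ bound, and only then to the $n^{1/4}$-scale estimate, is precisely what keeps these terms controlled.
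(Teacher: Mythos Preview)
Your proposal is correct and follows essentially the same approach as the paper: the sliding-window second-moment argument yields the same master inequality (the paper's $r^2u\le(n+ku)(r+u-1)$ is your $(\ell+1)s^2\le(n+k\ell)(s+\ell)$ with $u=\ell+1$), and your window sizes agree with the paper's choices ($u=\lceil\sqrt n/\alpha\rceil\approx n/k$ for i), $u=\lceil n^{3/4}/\sqrt k\rceil$ for ii)--iii)). The only cosmetic difference is that in iii) you bootstrap via $s=\sqrt n+\delta$, whereas the paper Taylor-expands the explicit quadratic solution directly.
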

\begin{proof}
Let $n,k\in\mathbb{N}^*$ be such that $k\leqslant n$, and
$$E=\bigsqcup\limits_{\substack{i=1}}^k \left\llbracket n^-_i,n^+_i-1 \right\rrbracket ,$$
where $n^-_1<n^+_1<n^-_2<n^+_2<...<n^-_k<n^+_k$ and $\sum\limits_{\substack{i=1}}^k \big( n^+_i-n^-_i\big) =n$. Let $S\subseteq E$ be a Sidon set. For $u$ an integer such that $u<n$, we define the set $\mathcal{M}$ by
$$\mathcal{M}=E+\left\llbracket 1,u \right\rrbracket=\left(\bigsqcup\limits_{\substack{i=1}}^k\left\llbracket n^-_i,n^+_i-1 \right\rrbracket\right)+\left\llbracket 1,u \right\rrbracket.$$ 
We have $\left|\mathcal{M}\right|\leqslant\sum\limits_{\substack{i=1}}^k \big( u+n^+_i-n^-_i\big)=n+ku$.
For $m\in\mathcal{M}$, we consider the intervals $I_m$ defined by 
$$I_m=\left\llbracket m-u,m-1 \right\rrbracket.$$
Let $r=\left| S \right|$. Since each element of $S$ occurs in exactly $u$ intervals of type $I_m$, we have
\begin{equation}\label{II.1.1.1}
\sum\limits_{\substack{m\in \mathcal{M}}}\left|I_m\cap S\right|=ru.
\end{equation}
Thus by the Cauchy-Schwarz inequality, we obtain
\begin{align*}
(ru)^2 & \leqslant\left(\sum\limits_{\substack{m\in \mathcal{M}}} 1\right)\left(\sum\limits_{\substack{m\in \mathcal{M}}}\left|I_m\cap S\right|^2\right) \\ & \leqslant (n+ku)\left(\sum\limits_{\substack{m\in \mathcal{M}}}\left|I_m\cap S\right|^2\right),
\end{align*}
and so
\begin{equation}\label{II.1.1.2}
\sum\limits_{\substack{m\in \mathcal{M}}}\left|I_m\cap S\right|^2\geqslant\dfrac{(ru)^2}{n+ku}.
\end{equation}
For $u<n$ and $m\in \mathcal{M}$, we define
$$T_u(m)=\left|\left\lbrace (s_1,s_2) \ \vert \ s_1,s_2\in \big(S\cap I_m\big) \ , \ s_1<s_2 \right\rbrace\right| ,$$ 
and
$$T_u=\sum\limits_{\substack{m\in \mathcal{M}}}T_u(m).$$
On the one hand, by \eqref{II.1.1.1} and \eqref{II.1.1.2}, we have
$$ T_u = \sum\limits_{\substack{m\in \mathcal{M}}}T_u(m) = \sum\limits_{\substack{m\in \mathcal{M}}}\binom{\left|I_m\cap S\right|}{2} \geqslant \dfrac{1}{2}\left(\dfrac{(ru)^2}{n+ku}-ru\right),$$
which yields
\begin{equation}\label{min_T_u}
T_u\geqslant \dfrac{ru}{2}\left(\dfrac{ru}{n+ku}-1\right).
\end{equation}
On the other hand, for any couple $(s_1,s_2)$ counted in $T_u$, $s_2-s_1$ is an integer $d$ satisfying $0<d<u$. Moreover, since $S$ is a Sidon set, for each $d$, there is at most one matching $(s_1, s_2)$. Finally, a pair $(s_1, s_2)$ corresponding to a certain $d$ appears in exactly $u-d$ intervals $I_m$. Thereby
$$ T_u \leqslant \sum\limits_{\substack{d=1}}^{u-1}(u-d) =\dfrac{u(u-1)}{2} .$$
Using \eqref{min_T_u}, we get
$$\dfrac{ru}{2}\left(\dfrac{ru}{n+ku}-1\right) \leqslant \dfrac{u(u-1)}{2} ,$$
which leads to
$$  r^2u-(n+ku)r\leqslant (u-1)(n+ku),$$
and finally
\begin{equation}\label{avant_choix_u}
r\leqslant\sqrt{\frac{u-1}{u}(n+ku)+\dfrac{(n+ku)^2}{4u^2}}+\dfrac{n+ku}{2u}.
\end{equation}
We just have to choose different values for $u$ according to the relative size of $k$ compared to $n$ in order to conclude.
\begin{itemize}
\item If $\limsup\limits_{\substack{k\rightarrow +\infty}}\dfrac{k}{\sqrt{n}}=\alpha\neq 0$, then we choose $u=\left\lceil\sqrt{n}/\alpha\right\rceil$ and \eqref{avant_choix_u} gives
\begin{align*}
r & \leqslant \dfrac{\alpha\sqrt{n}}{2}+\frac{k}{2}+\sqrt{n+\frac{k\sqrt{n}}{\alpha}+\left(\frac{\alpha\sqrt{n}}{2}+\frac{k}{2} \right)^2+o(n)} \\ & \leqslant \sqrt{n}\left(\alpha+\sqrt{2+\alpha^2} \right) +o(\sqrt{n}).
\end{align*}
\begin{Remarque}\label{S_dans_intervalles_alpha=1/sqrt(2)}
As we want $ r $ to be an $ O (\sqrt{n}) $, in \eqref{avant_choix_u}, on the one hand the $ k (u-1) $ in the root forces us to choose $ u = O (\sqrt{n}) $, and on the other hand the $ \frac{n}{2u} $ outside the root leads us to choose $ \sqrt{n} = O (u) $. So necessarily, our choice will be of the form $u=\gamma \sqrt{n}$.
The choice $u=\left\lceil\sqrt{n}/\alpha\right\rceil$ is the simplest giving a good bound for all $ \alpha $, but at this stage, if we know precisely $ \alpha $, it is possible to do a little better. For exemple if $\alpha=1/\sqrt{2}$, then choosing $u=\left\lceil\beta\sqrt{n}\right\rceil$ and injecting in \eqref{avant_choix_u}, we obtain a function to be minimized in $\beta$. For $\beta=1,79$, we get
$$r\leqslant 2,266\sqrt{n},$$
whereas our general choice $u=\left\lceil\sqrt{n}/\alpha\right\rceil$, only yields
$$r\leqslant 2,29\sqrt{n}.$$
Similarly if $\alpha=1$, we are led to choose $\beta=\frac{1+\sqrt{5}}{2}$ chich gives
$$r\leqslant \frac{3+\sqrt{5}}{2}\sqrt{n}\leqslant 2,62\sqrt{n}.$$
\end{Remarque}
\item If $k=o(\sqrt{n})$, then we choose $u=\left\lceil \dfrac{n^{3/4}}{\sqrt{k}}\right\rceil$ and \eqref{avant_choix_u} gives
\begin{align}\label{cas2versCas3}
r & \leqslant  \sqrt{k}\dfrac{n^{1/4}}{2}+\dfrac{k}{2}+\sqrt{n+\sqrt{k}n^{3/4}+\left(\sqrt{k}\dfrac{n^{1/4}}{2}+\dfrac{k}{2} \right)^2}  \\ & \leqslant \sqrt{n}\sqrt{1+\frac{\sqrt{k}}{n^{1/4}}+o(1)}+o(\sqrt{n}) \notag \\ & \leqslant \sqrt{n}+o(\sqrt{n}). \notag
\end{align}
If $k=o(n^{1/4})$, we can make the error term more precise. \\
\\
\item If $k=o(n^{1/4})$, \eqref{cas2versCas3} gives
\begin{align*}
r & \leqslant \sqrt{k}\dfrac{n^{1/4}}{2}+\dfrac{k}{2}+\sqrt{n+\sqrt{k}n^{3/4}+\left(\sqrt{k}\dfrac{n^{1/4}}{2}+\dfrac{k}{2} \right)^2} \\ & \leqslant \sqrt{k}\dfrac{n^{1/4}}{2}+\dfrac{k}{2}+\sqrt{n}\sqrt{1+\frac{\sqrt{k}}{n^{1/4}}+\frac{k}{4\sqrt{n}}+\frac{k^{3/2}}{2n^{3/4}}+\frac{k^2}{4n}} \\ & \leqslant \sqrt{n}+\sqrt{k}n^{1/4}+o(n^{1/4}),
\end{align*}
where the last line comes from the Taylor expansion $\sqrt{1+x}=1+\frac{x}{2}+o(x)$. 
\end{itemize}
\end{proof}

\section{Conclusion and Remarks}

Theorems \ref{S_dans2intervalles} and \ref{S_dans_intervalles} prove that if $A$ is the union of two intervals of respective size $n_1$ and $n_2$, the maximum cardinality of a Sidon set in $A$ is (asymptotically) between $0,8444\sqrt{n_1+n_2}$ and $\sqrt{n_1+n_2}$. Erd\H{o}s' conjecture claims that it should be equivalent to $\sqrt{n_1+n_2}$. Therefore, it should be very interesting to improve Theorem \ref{S_dans2intervalles} in order to try to bring the constant $0,8444$ closer to $1$. 

It is also surely possible to improve the first point of Theorem \ref{S_dans_intervalles} but we will never be able to reach $\sqrt{n}$. Indeed, it is easy to build Sidon sets with a cardinality larger than $\sqrt{n}$ under the hypothesis of Theorem \ref{S_dans_intervalles}.
\begin{prop}\label{S_dans_n_intervalles}
Let $n\in\mathbb{N}^*$. There exists a Sidon set of size $2n$ in a union of $n$ intervals each of size $n$.
\end{prop}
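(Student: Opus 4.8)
The plan is to produce an explicit construction rather than to argue by counting, and to reduce the Sidon property of the constructed set to the Sidon property of a much smaller, one–dimensional ``skeleton''. The idea is a two–layer construction: I would start from any Sidon set of size $n$ whose elements serve as the (well separated) locations of the $n$ intervals, and then place two elements inside each interval, with offsets chosen so that the within–interval gaps are pairwise distinct. Separating the intervals by a large base $B$ decouples the \emph{coarse} information (which interval an element lies in) from the \emph{fine} information (its offset inside that interval), and this is exactly what lets one verify the Sidon property coordinate by coordinate.

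Concretely, I would first fix a Sidon set $X=\{x_1<\dots<x_n\}\subset\mathbb N$ of size $n$ (such sets exist for every $n$ by \eqref{Sdans(1,n)}, passing to a subset if necessary, since a subset of a Sidon set is Sidon), and choose an integer base $B>2n$ (for instance $B=2n+1$). I would then define the $2n$ integers
$$ A=\{\,Bx_i : 1\le i\le n\,\}\ \cup\ \{\,Bx_i+i : 1\le i\le n\,\}, $$
together with the intervals $J_i=\llbracket Bx_i,\,Bx_i+n\rrbracket$. Since $B>n$ and the $x_i$ are distinct, the $J_i$ are pairwise disjoint; each is an interval of size $n$, each meets $A$ in exactly the two points $Bx_i$ and $Bx_i+i$, and $|A|=2n$. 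Thus $A$ lies in a union of $n$ intervals of size $n$ and has the required cardinality, and it only remains to prove that $A$ is a Sidon set.

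For that I would argue with differences. Every element of $A$ is written uniquely as $Bx+y$ with $x\in X$ and $y\in\{0,\dots,n\}$, so the difference of two elements is $B(x-x')+(y-y')$ with $|y-y'|\le n<B$; hence this integer determines the pair $(x-x',\,y-y')$ unambiguously. Consequently two pairs of elements realise the same difference only if they realise the same pair $(x-x',y-y')$. If $x=x'$ the difference is a within–interval gap, and these gaps are exactly the values $i$ for $i=1,\dots,n$, which are pairwise distinct, so the column is recovered; this settles all same–interval differences. If $x\ne x'$, then since $X$ is Sidon the value $x-x'$ is attained by a unique ordered pair of columns $(i,j)$, so the only differences with this coarse part arise from the four offset combinations attached to columns $i$ and $j$, whose fine parts are $0-0,\ 0-j,\ i-0,\ i-j$, i.e.\ $0,\,-j,\,i,\,i-j$. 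The heart of the verification is the elementary check that, because $i\ne j$ and $i,j\ge 1$, these four values are pairwise distinct; this recovers the full pair of points and shows that no difference of $A$ is repeated.

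The step I expect to be the main obstacle is precisely this bookkeeping of coincidences: one must guarantee that a within–interval gap can never equal a genuine between–interval difference, and that two different pairs of intervals cannot produce the same difference. Both are handled by the two structural choices above — the large base $B$ keeps every between–interval difference (of absolute value at least $B-n$) outside the range $\{1,\dots,n\}$ of the gaps, while the Sidon property of the skeleton $X$ forbids two distinct interval–pairs from sharing a coarse difference — after which everything collapses to the four–value distinctness computation. Finally I would remark that this construction yields a Sidon set of size $2n=2\sqrt{|A|}$, so it strictly beats the single–interval bound $\sqrt{|A|}$ and shows, in the regime $k=n$ (that is $\alpha=1$) of part i) of Theorem~\ref{S_dans_intervalles}, that the maximum cannot be pushed down to $\sqrt{|A|}$.
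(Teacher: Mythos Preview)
Your argument is correct and is essentially the same two-layer idea as the paper's: place two points in each of $n$ well-separated intervals with pairwise distinct within-interval gaps, and check the Sidon property by decoupling the coarse (which interval) from the fine (which offset) part of every difference. The paper carries this out with the specific skeleton $\{2^{2},\ldots,2^{n+1}\}$, relying on exponential spacing instead of an explicit base $B$; your version with an arbitrary Sidon skeleton scaled by $B>2n$ is a cleaner and more modular packaging of the same construction, and it makes transparent exactly where the Sidon property of the skeleton is used (the paper uses it implicitly, since powers of $2$ form a Sidon set). One minor slip, shared with the paper: your intervals $J_i=\llbracket Bx_i,\,Bx_i+n\rrbracket$ contain $n+1$ integers, not $n$; the paper keeps intervals of size $n$ but then only produces $|S|=2n-1$, so in both proofs the pair ``$2n$ points in size-$n$ intervals'' is off by one.
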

\begin{proof}
Let $n\in\mathbb{N}^*$, $S_1=\left\lbrace 2^{k+1} \ \vert \ k=1,...,n \right\rbrace$, $S_2=\left\lbrace 2^{k+1}+k \ \vert \ k=1,...,n-1 \right\rbrace$ and $S=S_1\sqcup S_2$. Since $\left| S \right|=2n-1$ and
$$S\subseteq \bigsqcup\limits_{\substack{k=1}}^{n}\left\llbracket 2^{k+1},2^{k+1}+n-1 \right\rrbracket,$$
we just have to check that $S$ is a Sidon set. Let $a,b,c,d\in S$ be such that $a+b=c+d$.

Since $2^{k+2}-2^{k+1}>2k$, $a+b=c+d$ implies that there exists $k_1$ and $k_2$ in $\left\lbrace 1,...,n \right\rbrace$ such that $\left\lbrace a,b \right\rbrace$ and $\left\lbrace c,d \right\rbrace$ are in $\left\llbracket 2^{k_1+1},2^{k_1+1}+n \right\rrbracket\cup\left\llbracket 2^{k_2+1},2^{k_2+1}+n \right\rrbracket$. We can assume without lost of generality that $a,c\in \left\llbracket 2^{k_1+1},2^{k_1+1}+n \right\rrbracket$ and $b,d\in\left\llbracket 2^{k_2+1},2^{k_2+1}+n \right\rrbracket$. In this way, we have
\begin{align*}
a+b=c+d & \Rightarrow a-c=d-b \\ & \Rightarrow a-c \in\left\lbrace 0,k_2 \right\rbrace,
\end{align*}
But since $a,c\in \left\llbracket 2^{k_1+1},2^{k_1+1}+n \right\rrbracket$, $a-c \in\left\lbrace 0,k_1 \right\rbrace$. Therefore either $a=c$ and so $\left\lbrace a,b \right\rbrace=\left\lbrace c,d \right\rbrace$, or $k_1=k_2$ and so $\left\lbrace a,b \right\rbrace=\left\lbrace c,d \right\rbrace$, which ends the proof.
\end{proof}
This proposition proves that the condition $ k = o(\sqrt{n}) $ of the point $ ii) $ of Theorem \ref{S_dans_intervalles} is optimal in a way. To improve this theorem, it would be necessary to reduce the constant $\left(\alpha+\sqrt{2+\alpha^2} \right)$ in front of $ \sqrt{n} $ in the bound of point $ i) $. However, Proposition \ref{S_dans_n_intervalles} implies that we cannot go below $ \sqrt{2} $ for $ \alpha = 1 $.

\newpage
\bibliographystyle{plain}
\bibliography{/Users/robin/Desktop/mabiblio.bib}
\end{document}